\newtheorem{dfn}{Definition}[section]
\newtheorem{thm}[dfn]{Theorem}
\newtheorem{prop}[dfn]{Proposition}
\newtheorem{cor}[dfn]{Corollary}
\newtheorem{remark}[dfn]{Remark}
\newtheorem{example}[dfn]{Example}
\numberwithin{equation}{section}
\DeclareMathOperator*{\divergence}{div}
\title[Stability estimates for a hyperbolic inverse problem]{An inverse hyperbolic obstacle problem}
\author{Mourad Choulli}
\address{Universit\'{e} de Lorraine, 34 cours L\'{e}opold, 54052 Nancy cedex, France}
\email{mourad.choulli@univ-lorraine.fr}
\author{Hiroshi Takase}
\address{Institute of Mathematics for Industry, Kyushu University, 744 Motooka, Nishi-ku, Fukuoka 819-0395, Japan}
\email{htakase@imi.kyushu-u.ac.jp}
\date{\today}
\keywords{hyperbolic obstacle problem, inverse problem, H\"older stability, Carleman estimate.}
\subjclass[2020]{35R30, 35L20, 58J45}
\begin{document}
\begin{abstract}
We establish H\"older stability of an inverse hyperbolic obstacle problem. Mainly, we study the problem of reconstructing an unknown function defined on the boundary of the obstacle from two measurements taken on the boundary of a domain surrounding the obstacle. 
\end{abstract}
\maketitle

\section{Introduction and main result}

Let $n\ge 2$ be an integer. Throughout this text, we use the Einstein summation convention for quantities with indices. If in any term appears twice, as both an upper and lower index, that term is assumed to be summed from $1$ to $n$.

Let $(g_{k\ell})\in W^{2,\infty}(\mathbb{R}^n;\mathbb{R}^{n\times n})$ be a symmetric matrix-valued function satisfying
\[
g_{k\ell}(x)\xi^k\xi^\ell\ge \kappa|\xi|^2\quad x,\xi \in\mathbb{R}^n,
\]
where $\kappa >0$ is a constant. Note that $(g^{k\ell})$  the matrix inverse to $g$ is uniformly positive definite as well. Recall that the Laplace-Beltrami operator associated with  the metric tensor $g=g_{k\ell}dx^k\otimes dx^\ell$ is given by
\[
\Delta_g u:=\left(1/\sqrt{|g|}\right)\partial_k\left(\sqrt{|g|}g^{k\ell}\partial_\ell u\right),
\]
where $|g|=\mbox{det} (g)$.

For convenience, we recall the following usual notations, %where $(u_1,\ldots ,f_n)$ denotes the basis dual to the euclidean basis of $\mathbb{R}^n$,
\begin{align*}
&\langle X,Y\rangle=g_{k\ell}X^kY^\ell,\quad X=X^k\frac{\partial}{\partial x^k},\; Y=Y^k\frac{\partial}{\partial x^k},
\\
&\nabla_gw=g^{k\ell}\partial_k w\frac{\partial}{\partial x^\ell},
\\
&|\nabla_gw|_g^2=\langle\nabla_gw,\nabla_g\overline{w}\rangle=g^{k\ell}\partial_k w\partial_\ell \overline{w}.
%\\
%&\nu_g=(\nu_g)^i\frac{\partial}{\partial x_j},\quad (\nu_g)^i=\frac{g^{ij}\nu_j}{\sqrt{g^{k\ell}\nu_k\nu_\ell}}
%\\
%&\partial_{\nu_g}w=\langle\nu_g,\nabla_g w\rangle, \quad w\in H^1(D).
\end{align*}
%where $(u_1,\ldots ,f_n)$ denotes the basis dual to the euclidean basis of $\mathbb{R}^n$.

Also, recall that the Christoffel symbols $\Gamma_{k\ell}^m$, $1\le k,\ell,m\le n$, are given as follows
\[
\Gamma_{k\ell}^m:=(1/2)g^{jm}(\partial_kg_{j \ell}+\partial_\ell g_{jk}-\partial_jg_{k\ell}),
\]
and the  Hessian matrix $\nabla_g^2w$ is defined by
\[
(\nabla_g^2w)_{k\ell}:= \partial_{k}\partial_\ell w-\Gamma_{k\ell}^m\partial_mw.
\]

If $dx$ is the Lebesgue measure on $\mathbb{R}^n$, set $dV_g=\sqrt{|g|}dx$. Let $O$ be an open subset of $\mathbb{R}^n$. We assume hereinafter that $L^2(O)$, $H^1(O)$ and $H^2(O)$ are endowed respectively with the norms
\begin{align*}
&\|w\|_2:=\left(\int_O |w|^2dV_g\right)^{1/2},
\\
&\|w\|_{1,2}:= \left(\int_O \left[|w|^2+|\nabla_gw|_g^2\right]dV_g\right)^{1/2},
\\
&\|w\|_{2,2}:= \left(\int_O \left[|w|^2+|\nabla_gu|_g^2+|\nabla_g^2w| ^2_g\right]dV_g\right)^{1/2}.
\end{align*}
Here
\[
|\nabla_g^2w| ^2_g:=g^{k_1\ell_1}g^{k_2\ell_2}(\nabla_g^2w)_{k_1k_2}(\nabla_g^2\overline{w})_{\ell_1\ell_2}.
\]
In the rest of this text, $\phi\in C^3(\mathbb{R}^n)$ satisfies the following properties
\[
\inf_{\{\phi> 0\}}|\nabla_g\phi |_g>0 ,\quad \inf_{\{\phi> 0\}}\inf_{|\xi|=1}\nabla_g ^2\phi(\xi,\xi)>0
\]
and $\{\phi<0\}\subset\mathbb{R}^n$ is bounded. Let us give an example of such a function.

\begin{example}{\rm
We denote the open unit ball of $\mathbb{R}^n$ by $\mathbb{B}$ and set $\mathbb{B}_r=r\mathbb{B}$, $r>0$. For fixed $r_0>0$, let $\phi=|x|^2-r_0^2$. Furthermore, assume that $g=\mathbf{I}$ in $\mathbb{B}_{r_1}^c:=\mathbb{R}^n\setminus \mathbb{B}_{r_1}$, where $r_1>r_0$ and  $\mathbf{I}$ is the identity matrix of $\mathbb{R}^n$. Then, we have $|\nabla_g\phi(x)|_g=2|x|\ge 2r_1$ if $x\in \mathbb{B}_{r_1}^c$ and, since $g^{-1}$ is uniformly positive definite, we have
\[
|\nabla_g\phi(x)|_g>0,\quad x\in \overline{\mathbb{B}_{r_1}}\setminus \mathbb{B}_{r_0}.
\]
On the other hand, as $\nabla_g^2\phi=2\mathbf{I}$ in $\mathbb{B}_{r_1}^c$, we see that $\nabla_g^2\phi\ge \mathbf{I}$ in $\mathbb{B}_{r_0}^c$ provided that the norm of $g$ in $W^{1,\infty}(\overline{\mathbb{B}_{r_1}}\setminus \mathbb{B}_{r_0};\mathbb{R}^{n\times n})$ is sufficiently small.
}\end{example}

Before stating our main result, we introduce new definitions. Let $T>0$, $B:=\{\phi<0\}$, $\Gamma:=\{ \phi=0\}$ ($=\partial B$), $U:=\mathbb{R}^n\setminus \overline{B}$ and $\Sigma :=\Gamma\times (0,T)$.

Since we will consider an IBVP with nonhomogenous boundary condition, we introduce the space
\[
\mathcal{Y}:=\{(f,u_0,u_1)=(H_{|\Sigma},H(\cdot,0),\partial_tH(\cdot,0));  H\in \mathcal{Z}\},
\]
where 
\[
\mathcal{Z}:=W^{3,1}((0,T);L^2(U))\cap W^{2,1}((0,T);H^1(U))\cap W^{1,1}((0,T);H^2(U)).
\]
We equip $\mathcal{Z}$ with its natural norm denoted by $\|\cdot\|_{\mathcal{Z}}$, and we observe that $\mathcal{Z}$ is continuously embedded in $\mathcal{X}$, where
\[
\mathcal{X}:=C^2([0,T];L^2(U))\cap C^1([0,T];H^1(U))\cap C([0,T];H^2(U))
\] 
with its natural norm that we denote by $\| \cdot\|_{\mathcal{X}}$.
Define
\[
\dot{\overbrace{(f,u_0,u_1)}}:=\{H\in  \mathcal{Z};\; (H_{|\Sigma},H(\cdot,0),\partial_tH(\cdot,0))=(f,u_0,u_1)\}.
\]
We endow $\mathcal{Y}$ with the quotient norm
\[
\|(f,u_0,u_1)\|_{\mathcal{Y}}:=\inf\{\|H\|_{\mathcal{Z}};\; H\in \dot{\overbrace{(f,u_0,u_1)}}\}.
\]

Let $p\in L^\infty(U)$ and consider the nonhomogenuos IBVP
\begin{equation}\label{exIBVP}
\begin{cases}
(\partial_t^2 -\Delta_g-p)u=0\quad \text{in}\; U\times(0,T),
\\
u_{|\Sigma}=f,
\\
(u(\cdot,0),\partial_tu(\cdot,0))=(u_0,u_1).
\end{cases}
\end{equation}

We prove in Appendix \ref{appA} that, for each $(f,u_0,u_1)\in \mathcal{Y}$, \eqref{exIBVP} admits a unique solution $u=u(f,u_0,u_1)\in \mathcal{X}$ and
\[
\|u\|_{\mathcal{X}}\le \mathbf{c} \|(f,u_0,u_1)\|_{\mathcal{Y}},
\]
where $\mathbf{c}=\mathbf{c}(g,\phi,p,T)>0$ is a constant.

Let $\Omega \Supset B$ be an arbitrary fixed $C^{0,1}$ bounded domain with boundary $\partial\Omega$. Henceforth, we assume that $D:=\Omega\setminus \overline{B}$ is connected. We aim to establish H\"{o}lder stability of the determination of the unknown function $f$ from the measurements $u(f,0,0)_{|\Sigma_0}$ and $\partial_{\nu_g} u(f,0,0)_{|\Sigma_0}$, where $\Sigma_0 :=\partial\Omega \times(0,T)$. This problem can be seen as the determination of an unknown source emanating from the obstacle from two measurements performed far from the obstacle.

The same type of inverse problem was considered by Choulli-Takase \cite{CT2024} for elliptic and parabolic equations. See for example Isakov \cite{Isakov2009} and Bourgeois-Ponomarev-Dard\'e \cite{Bourgeois2019} for an inverse obstacle problem, which determines interior regions from finite-time Cauchy data. Recently, the first author obtained a stability inequality for an inverse source problem for an obstacle problem for the wave equation. We refer to Choulli \cite{Choulli2024}. Other references have been mentioned in this work.

We define
\[
\mathscr{F}_0:=\left\{f=\mathfrak{a}\otimes\mathfrak{b};\; \mathfrak{a}\in H^{3/2}(\Gamma),\; \mathfrak{b}\in W^{3,1}((0,T)),\; \mathfrak{b}(0)=\mathfrak{b}'(0)=0\right\}.
\]
It is worth observing that
\[
\{(f,0,0);\; f\in \mathscr{F}_0\}\subset \mathcal{Y}.
\]

The following additional notations will be used in the sequel
\begin{align*}
&\nu_g=(\nu_g)^k\frac{\partial}{\partial x^k},\quad (\nu_g)^k=\frac{g^{k\ell}\nu_\ell}{\sqrt{g^{\ell_1\ell_2}\nu_{\ell_1}\nu_{\ell_j}}},
\\
&\partial_{\nu_g}w=\langle\nu_g,\nabla_g w\rangle,
\end{align*}
where $\nu$ denotes the outer unit normal to $\partial D$. Also, define  $\nabla_{\tau_g} w$ by
\[
\nabla_{\tau_g} w=\nabla_g w-(\partial_{\nu_g} w)\nu_g.
\]
We check that 
\[
|\nabla_g w|_g^2=|\nabla_{\tau_g} w|_g^2+|\partial_{\nu_g} w|^2.
\]

Let $\alpha>0$ and $\beta>0$ be fixed and define the admissible set of the unknown function $f$ by
\[
\mathscr{F}:=\left\{f=\mathfrak{a}\otimes\mathfrak{b}\in \mathscr{F}_0; \; \|\mathfrak{a}\|_{L^2(\Gamma)}\ge\alpha\; \mbox{and}\; \|\nabla_{\tau_g}\mathfrak{a}\|_{L^2(\Gamma)}\le \beta\right\},
\]
where 
\[
\|\mathfrak{a}\|_{L^2(\Gamma)}:=\left(\int_\Gamma|\mathfrak{a}|^2dS_g\right)^{1/2}\quad \mathrm{and}\quad \|\nabla_{\tau_g}\mathfrak{a}\|_{L^2(\Gamma)}:=\left(\int_\Gamma|\nabla_{\tau_g}\mathfrak{a}|_g^2dS_g\right)^{1/2}. 
\]
Here $dS_g:=\sqrt{|g|}dS$, where $dS$ denotes the surface measure on $\partial D$.

Next, for $f\in \mathscr{F}_0$, let $u(f):=u(f,0,0)$,
\[\mathcal{D}(f):=\|u(f)\|_{H^1(\Sigma)}+\|\partial_{\nu_g} u(f)\|_{L^2(\Sigma)}\]
and
\[\mathcal{D}_0(f):=\|u(f)\|_{H^1(\Sigma_0)}+\|\partial_{\nu_g} u(f)\|_{L^2(\Sigma_0)}.
\]
%where $\nu$ denotes the outer unit normal to $\Lambda$. 
Here $L^2(\Sigma)$ and $H^1(\Sigma)$ are endowed respectively with the norms
\begin{align*}
&\|w\|_{L^2(\Sigma)}:=\left(\int_\Sigma |w|^2dS_gdt\right)^{1/2},
\\
&\|w\|_{H^1(\Sigma)}:=\left(\int_\Sigma \left[|w|^2+|\partial_tw|^2+|\nabla_{\tau_g}w|_g^2\right]dS_gdt\right)^{1/2},
\end{align*}
and similarly for $L^2(\Sigma_0)$ and $H^1(\Sigma_0)$.

Finally, define $\mathfrak{m}:=\max_{\overline{D}}\phi$, $\delta:=\inf_{D}|\nabla_g \phi|_g$ and $T^\ast:=\mathfrak{m}/\delta$.

\begin{thm}\label{exHolder}
Assume that $T>T^\ast$. Let $\tau \in (0,T-T^\ast)$ and $\zeta=(g,\phi,p,T,\tau,\beta/\alpha)$. Then there exist constants $C=C(\zeta)>0$ and $\theta=\theta(\zeta)\in(0,1)$ such that for any $f\in\mathscr{F}$ we have
\[
\|f\|_{H^1(\Gamma\times(0,\tau))}
\le C\left(\mathcal{D}_0(f)+\mathcal{D}(f)^{1-\theta}\mathcal{D}_0(f)^\theta\right).
\]
\end{thm}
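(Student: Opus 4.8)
The strategy is the usual "unique continuation from Cauchy data plus boundary reconstruction" scheme for hyperbolic equations, carried out with a global Carleman estimate governed by the weight $\phi$. Let me set up the plan in stages.

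First, I would reduce the Hölder stability on $\Gamma_0$ to Hölder stability across the shell $D = \Omega\setminus\overline B$. Given $f\in\mathscr F$, let $u=u(f,0,0)$. Since $u$ solves $(\partial_t^2-\Delta_g)u=0$ in $D\times(0,T)$ with Cauchy data $(u,\partial_{\nu_g}u)$ on $\Sigma_0$ bounded by $\mathcal D_0$, a quantitative unique continuation / observability argument propagates the smallness from $\Sigma_0$ inward to $\Sigma$: one obtains a conditional Hölder estimate of the form $\mathcal D \le C(\mathcal D_0 + \mathcal D^{1-\theta_1}\mathcal D_0^{\theta_1})$ on a suitable time subinterval, provided $T$ exceeds the geodesic-type crossing time $T^\ast=\mathfrak m/\delta$. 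This is where the pseudoconvexity hypotheses on $\phi$ (positivity of $|\nabla_g\phi|_g$ and of the Hessian form on $\{\phi>0\}$) enter: they guarantee that $\phi$ is a valid Carleman weight for the wave operator in $D$, and the condition $T>T^\ast$, together with $\tau\in(0,T-T^\ast)$, ensures the characteristic cones fit inside the observation cylinder. The technical device here is a Carleman estimate with weight $e^{2s\varphi}$, $\varphi = e^{\lambda(\phi - \sigma t^2)}$ (or a linear-in-$\phi$ variant adapted to the second-order-in-time setting), integrated over $D\times(-T,T)$ after an even extension in time.

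Second, having transferred the data to $\Sigma$, I would reconstruct $f=\mathfrak a\otimes\mathfrak b$ on $\Gamma\times(0,\tau)$. Because $f\in\mathscr F$ we have $f(x,t)=\mathfrak a(x)\mathfrak b(t)$ with $|\mathfrak a|\ge\alpha$ and $|\nabla_{\tau_g}\mathfrak a|_g\le\beta$, and $\mathfrak b(0)=\mathfrak b'(0)=0$. The key observation is that on $\Sigma$ one reads off both $u_{|\Sigma}=f$ and $\partial_{\nu_g}u_{|\Sigma}$, so $\mathcal D$ controls $\|f\|_{H^1(\Sigma)}\ge\|f\|_{L^2((0,T);H^1(\Gamma))}$ up to the tangential-gradient structure. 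More precisely, from $f=\mathfrak a\otimes\mathfrak b$, $\nabla_{\tau_g}f = \mathfrak b\,\nabla_{\tau_g}\mathfrak a$, and $|\mathfrak a|\ge\alpha$ lets one bound $\|\mathfrak a\|_{H^1(\Gamma)}$ by $\alpha^{-1}$ times an $L^\infty$-in-space, $L^2$-in-time slice of $f$, while the ratio constraint $\beta/\alpha$ keeps the $H^1(\Gamma)$ norm of $\mathfrak a$ comparable to its $L^2(\Gamma)$ norm on the set where $\mathfrak b$ is non-negligible; this is exactly why $\zeta$ depends on $\beta/\alpha$. Combining the interpolation inequality from Step 1 with this algebraic extraction yields $\|f\|_{L^2((0,\tau);H^1(\Gamma))}\le C(\mathcal D_0+\mathcal D^{1-\theta}\mathcal D_0^\theta)$ with $\theta$ and $C$ depending only on $\zeta$.

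The main obstacle is Step 1: producing the quantitative unique continuation estimate with an \emph{explicit} Hölder modulus and with constants depending only on $\zeta=(g,\phi,\tau,\beta/\alpha)$, in the correct geometry. The delicate points are (i) choosing the Carleman weight so that its level sets foliate $D$ compatibly with $\phi$ and the time cutoff, using $\mathfrak m=\max_{\overline D}\phi$ and $\delta=\min_{\overline D}|\nabla_g\phi|_g$ to pin down the admissible time window $T>T^\ast$; (ii) absorbing the lower-order and boundary commutator terms generated when one localizes with a cutoff supported near $\Sigma_0$ and near $\Sigma$, which forces the appearance of the interpolation exponent $\theta$; and (iii) handling the non-zero boundary contribution on $\Sigma$ itself — since $u_{|\Sigma}=f\ne 0$, the Carleman estimate must be applied to a function that already carries this inhomogeneity, so one either works with $v=u-\widetilde H$ for an extension $\widetilde H\in\mathcal Z$ realizing $f$ (controlled by $\|(f,0,0)\|_{\mathcal Y}$, hence by $\|u\|_{\mathcal X}$ via \eqref{sol1}), or keeps the boundary term and estimates it directly by $\mathcal D$. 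The energy estimate \eqref{sol1} is what closes the loop, bounding the "interior" norm $\mathcal D$ appearing on the right in terms of the data and making the final inequality self-contained in $\mathcal D,\mathcal D_0$.
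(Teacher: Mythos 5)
Your toolkit is the right one (a Carleman estimate on $D\times(0,T)$ with a weight built from $\phi$, optimization in the large parameter $s$ to produce the H\"older exponent, and the product structure of $f$ together with the ratio $\beta/\alpha$ to pass from $L^2(\Gamma)$ to $H^1(\Gamma)$), but your Step 1 contains a genuine gap, and it is exactly the point the paper's proof is built around. You claim a quantitative unique continuation estimate of the form $\mathcal D\le C(\mathcal D_0+\mathcal D^{1-\theta_1}\mathcal D_0^{\theta_1})$ for a general solution of the wave equation in $D$ observed only on $\Sigma_0$. Such an estimate cannot be pushed up to the inner boundary $\Sigma$ without some control of the solution there: integrating by parts in the Carleman identity, the boundary $\Sigma$ produces, besides the favorable term $\int_\Sigma e^{2s\varphi}s^3|u|^2$ (favorable because $\nu_g=-\nabla_g\phi/|\nabla_g\phi|_g$ on $\Gamma$, so $\partial_{\nu_g}\varphi<0$), the unfavorable term $\int_\Sigma e^{2s\varphi}s|\nabla_{\tau_g}u|_g^2$, whose weight is \emph{not} exponentially small ($\varphi=-\gamma t$ on $\Sigma$, the same size as the weight appearing on the left over $\Gamma\times(0,\tau)$). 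Neither of your two remedies for point (iii) works: bounding this term ``directly by $\mathcal D$'' gives a coefficient of $\mathcal D^2$ that cannot be driven to zero by choosing $s$, so optimization only returns the trivial bound $\|f\|\le C(\mathcal D_0+\mathcal D)$; and subtracting an extension $\widetilde H$ realizing $f$ is circular, since $f$ is the unknown. What actually closes the argument is the admissibility constraint used \emph{inside} the Carleman estimate: for $f=\mathfrak a\otimes\mathfrak b\in\mathscr F$ one has $|\nabla_{\tau_g}u|_g=|\nabla_{\tau_g}\mathfrak a|_g|\mathfrak b|\le(\beta/\alpha)|f|$ on $\Sigma$, so the bad $O(s)$ tangential term is absorbed by the good $O(s^3)$ term for $s$ large. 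You invoke $\beta/\alpha$ only in Step 2 as an a posteriori comparison of $\|\mathfrak a\|_{H^1(\Gamma)}$ with $\|\mathfrak a\|_{L^2(\Gamma)}$ (which is indeed the paper's final step), but its essential role is in closing Step 1; without it your Step 1 fails. Note also that the paper never controls the full Cauchy data $\mathcal D$ on $\Sigma$ (in particular not $\partial_{\nu_g}u|_\Sigma$): it only extracts $\|f\|_{L^2(\Gamma\times(0,\tau))}$ from the good boundary term, which suffices.

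A secondary divergence: you propose an even reflection in time and a weight of the form $e^{\lambda(\phi-\sigma t^2)}$, whereas the paper stays on $(0,T)$ with the linear weight $\varphi=\phi-\gamma t$, $0<\gamma<\delta$, and disposes of the terminal terms at $t=T$ through the factor $e^{2(\mathfrak m-\gamma T)s}$ (exponentially small once $\gamma(T-\tau)>\mathfrak m$, which is where $T>T^\ast=\mathfrak m/\delta$ and the choice of $\tau$ enter), combined with the elementary energy bound $\mathcal E(u)(T)\le e^T(\mathcal D_0^2+\mathcal D^2)$. Your variant could be made to work for that particular sub-issue, but it does not repair the main gap above.
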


Note that Theorem \ref{exHolder} quantifies the unique determination of $f$ from the Cauchy data $\mathcal{D}_0(f)$. The stability inequality in Theorem \ref{exHolder} is of H\"older type. Indeed, if we assume the a priori bound $\max(\mathcal{D}_0(f),\mathcal{D}(f))\le \varkappa$, for some $\varkappa >0$, then the inequality in Theorem \ref{exHolder} yields
\[
\|f\|_{H^1(\Gamma\times(0,\tau))}\le C\varkappa^{1-\theta}\mathcal{D}_0(f)^\theta.
\]

%Define 
%\begin{align*}
%&\mathcal{Z}_0=\left\{\mathfrak{a}=\mathfrak{b}\otimes h;\; \mathfrak{b}=\mathfrak{B}_{|\Gamma},\; \mathfrak{B}\in H^2(U)\right.
%\\
%&\hskip 5cm \left. h\in W^{3,1}((0,T)),\; h(0)=h'(0)=0\right\}\subset \mathcal{Z}.
%\end{align*}
%
%Examining carefully the proof Theorem \ref{exHolder}, we derive the following result. 
%
%\begin{thm}\label{exHolder2}
%Assume that $T>T^\ast$. Let $\tau \in (0,T-T^\ast)$ and $\zeta=\zeta(g,\phi,\tau,\beta/\alpha)$. Then there exist $C=C(\zeta)>0$ and $\theta=\theta(\zeta)\in(0,1)$ such that for any $\mathfrak{a}\in\mathcal{Z}_0$ we have
%\[
%\|\mathfrak{a}\|_{H^1(\mathcal{S}\times (0,\tau))}
%\le C\left(\mathcal{D}_0+\mathcal{D}^{1-\theta}\mathcal{D}_0^\theta\right).
%\]
%\end{thm}

In the case where $\mathfrak{b}$ is known we have the following result.
%and satisfies
%\[
%\|\mathfrak{b}\|_{L^2((0,\tau))}>0,
%\] 
%where $\tau\in(0,T-T^\ast)$ is as in Theorem \ref{exHolder}, we derive the following result.

\begin{cor}
Assume that $T>T^\ast$. Let $\tau \in (0,T-T^\ast)$ and $\zeta=(g,\phi,p,T,\tau,\beta/\alpha)$. Then there exist constants $C=C(\zeta)>0$ and $\theta=\theta(\zeta)\in(0,1)$ such that for any $f=\mathfrak{a}\otimes \mathfrak{b}\in\mathscr{F}$ we have
\[
\|\mathfrak{a}\|_{H^1(\Gamma)}
\le C\|\mathfrak{b}\|_{L^2((0,\tau))}^{-1}\left(\mathcal{D}_0(f)+\mathcal{D}(f)^{1-\theta}\mathcal{D}_0(f)^\theta\right),
\]
provided that $\|\mathfrak{b}\|_{L^2((0,\tau))}>0$.
\end{cor}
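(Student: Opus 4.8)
The plan is to deduce the corollary directly from Theorem \ref{exHolder} by exploiting the product structure $f=\mathfrak{a}\otimes\mathfrak{b}$ together with the definition of the norm on $L^2((0,\tau);H^1(\Gamma))$; no new analysis is needed beyond a factorization of that norm.

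First I would record, for $f=\mathfrak{a}\otimes\mathfrak{b}\in\mathscr{F}$, the identity
\[
\|f\|_{L^2((0,\tau);H^1(\Gamma))}^2=\int_0^\tau\|\mathfrak{a}\,\mathfrak{b}(t)\|_{H^1(\Gamma)}^2\,dt=\|\mathfrak{a}\|_{H^1(\Gamma)}^2\int_0^\tau|\mathfrak{b}(t)|^2\,dt,
\]
which uses only that the tangential gradient $\nabla_{\tau_g}$ acts in the spatial variable (so $\nabla_{\tau_g}(\mathfrak{a}\,\mathfrak{b}(t))=\mathfrak{b}(t)\nabla_{\tau_g}\mathfrak{a}$) and Fubini's theorem. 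Consequently
\[
\|f\|_{L^2((0,\tau);H^1(\Gamma))}=\|\mathfrak{a}\|_{H^1(\Gamma)}\,\|\mathfrak{b}\|_{L^2((0,\tau))}.
\]

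Next, since $\mathfrak{b}$ is assumed known with $\|\mathfrak{b}\|_{L^2((0,\tau))}>0$, I would apply Theorem \ref{exHolder} to $f$ (legitimate because $\mathfrak{b}\in W^{3,1}((0,T))$ and $\mathfrak{b}(0)=\mathfrak{b}'(0)=0$ force $f\in\mathscr{F}$), keep the very same constants $C=C(\zeta)$ and $\theta=\theta(\zeta)$ it supplies, and divide through by $\|\mathfrak{b}\|_{L^2((0,\tau))}$ after substituting the displayed identity:
\[
\|\mathfrak{a}\|_{H^1(\Gamma)}=\|\mathfrak{b}\|_{L^2((0,\tau))}^{-1}\|f\|_{L^2((0,\tau);H^1(\Gamma))}\le C\|\mathfrak{b}\|_{L^2((0,\tau))}^{-1}\left(\mathcal{D}_0+\mathcal{D}^{1-\theta}\mathcal{D}_0^\theta\right),
\]
which is the claimed estimate.

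I do not expect any genuine obstacle: the only step needing a line of justification is the factorization of the $L^2((0,\tau);H^1(\Gamma))$ norm of a tensor product, and the admissibility hypotheses on $\mathfrak{b}$ are exactly what is required for Theorem \ref{exHolder} to apply. It would be worth remarking explicitly that $C$ and $\theta$ are independent of $\mathfrak{b}$, the entire $\mathfrak{b}$-dependence of the bound being carried by the prefactor $\|\mathfrak{b}\|_{L^2((0,\tau))}^{-1}$, so that the corollary really is a reconstruction estimate for the spatial profile $\mathfrak{a}$ once the temporal profile is prescribed.
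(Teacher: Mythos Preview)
Your proposal is correct and is exactly the derivation the paper has in mind: the corollary is stated without a separate proof, and the intended justification is precisely the factorization $\|f\|_{L^2((0,\tau);H^1(\Gamma))}=\|\mathfrak{a}\|_{H^1(\Gamma)}\|\mathfrak{b}\|_{L^2((0,\tau))}$ followed by division, using Theorem~\ref{exHolder}.
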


To our knowledge, there are no other results in the literature comparable to those established in the present work.

\begin{remark}{\rm
We restrict our analysis to the case where $f\in\mathscr{F}_0$ can be written in the form of variable separation, for the sake of brevity. For more general cases, by replacing $\mathscr{F}_0$ with
\[
\mathscr{F}_0':=\left\{f=H_{|\Sigma};\; H\in\mathcal{Z},\; H(\cdot,0)=\partial_t H(\cdot,0)=0\right\}
\]
and $\mathscr{F}$ with
\[
\mathscr{F}':=\left\{f\in \mathscr{F}_0'; \; \|\nabla_{\tau_g}f(\cdot,t)\|_{L^2(\Gamma)}\le M\|f(\cdot,t)\|_{L^2(\Gamma)},\; t\in[0,T]\right\},
\]
where $M>0$ is a given constant, the same result as in Theorem \ref{exHolder} holds by replacing $\beta/\alpha$ with $M$.
}\end{remark}

\section{Proof of Theorem \ref{exHolder}}

\subsection{Preliminaries}

Recall that $D=\Omega \setminus \overline{B}$, $\Sigma=\partial B\times(0,T)$ and $\Sigma_0=\partial\Omega\times(0,T)$. Let $Q:=D\times (0,T)$, $\Upsilon :=\partial D\times (0,T)=\Sigma\cup\Sigma_0$, $0<\gamma <\delta$ and
\[
\varphi(x,t):=\phi(x)-\gamma t.
\]

The proof of Theorem \ref{exHolder} is based on the following Carleman inequality.

\begin{prop}\label{global_Carleman_estimate}
Let $\zeta_0=(g,\phi,p,\gamma)$. There exist constants $s_\ast=s_\ast (\zeta_0)>0$ and $C=C(\zeta_0)>0$ such that for any $s\ge s_\ast$ and $u\in H^2(Q)$ satisfying 
$u=\partial_t u=0$ on $D\times\{0\}$ we have
\begin{align*}
&C\left(\int_{Q} e^{2s\varphi}(s|\nabla_g u|_g^2+s^3|u|^2)dV_gdt+\int_{\Sigma}e^{2s\varphi}(s|\partial_t u|^2+s|\partial_{\nu_g}u|^2+s^3|u|^2)dS_gdt\right)
\\
&\hskip 2cm\le \int_{Q} e^{2s\varphi}|(\partial_t^2-\Delta_g-p)u|^2dV_gdt+\int_{\Sigma}e^{2s\varphi}s|\nabla_{\tau_g} u|_g^2dS_gdt
\\
&\hskip 4cm+\int_{\Sigma_0} e^{2s\varphi}(s|\partial_t u|^2+s|\nabla_g u|_g^2+s^3|u|^2)dS_gdt
\\
&\hskip 5cm+\int_{D\times\{T\}}e^{2s\varphi}(s|\partial_t u|^2+s|\nabla_g u|_g^2+s^3|u|^2)dV_g.
\end{align*}
\end{prop}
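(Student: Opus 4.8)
The plan is to derive the inequality from a pointwise Carleman estimate for the wave operator $\partial_t^2-\Delta_g$ with the linear weight $\varphi(x,t)=\phi(x)-\gamma t$, then integrate over $Q$ and collect the boundary contributions. First I would introduce the conjugated operator: writing $v=e^{s\varphi}u$ and $P_s v=e^{s\varphi}(\partial_t^2-\Delta_g)(e^{-s\varphi}v)$, I would split $P_s$ into its formally self-adjoint and skew-adjoint parts $P_s=P_++P_-$ and expand $\|P_sv\|_{L^2(Q)}^2=\|P_+v\|^2+\|P_-v\|^2+2(P_+v,P_-v)$. The cross term $2(P_+v,P_-v)$, after integration by parts in $(x,t)$, produces a volume integral whose integrand is a quadratic form in $(v,\partial_t v,\nabla_g v)$ plus a collection of boundary integrals over $\Sigma\cup\Sigma_0\cup(D\times\{T\})\cup(D\times\{0\})$; the terms on $D\times\{0\}$ vanish because $u=\partial_t u=0$ there. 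The crucial point is that the assumptions $\inf_{\{\phi>0\}}|\nabla_g\phi|_g>0$ and $\inf_{\{\phi>0\}}\inf_{|\xi|=1}\nabla_g^2\phi(\xi,\xi)>0$, together with $\gamma<\delta$, make $\varphi$ strictly pseudoconvex with respect to the symbol of $\partial_t^2-\Delta_g$ on $\overline{Q}$; this is exactly what forces the principal part of the volume quadratic form to be bounded below by $c\,s\,|\nabla_g v|_g^2+c\,s^3|v|^2$ for $s$ large. Converting back from $v$ to $u$ via $v=e^{s\varphi}u$ (which is where the weights $e^{2s\varphi}$ appear and where lower-order commutator terms are absorbed into the left-hand side by taking $s\ge s_\ast$) yields the interior estimate $\int_Q e^{2s\varphi}(s|\nabla_g u|^2+s^3|u|^2)\,dV_gdt\le C\int_Q e^{2s\varphi}|(\partial_t^2-\Delta_g)u|^2\,dV_gdt+(\text{boundary terms})$.

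It remains to control the boundary terms on $\Sigma$. On $\Sigma_0$ and on $D\times\{T\}$ one simply keeps the full energy $s|\partial_t u|^2+s|\nabla_g u|_g^2+s^3|u|^2$, which is the right-hand side allowed by the statement, so no sign information is needed there. On $\Sigma=\Gamma\times(0,T)$, however, the geometry is favorable: $\Gamma=\{\phi=0\}$ and $\{\phi<0\}=B$ is the region excised, so the outer normal $\nu$ to $D$ along $\Gamma$ points into $B$, i.e. in the direction of decreasing $\phi$, hence $\partial_{\nu_g}\phi\le 0$ on $\Gamma$ — in fact $\partial_{\nu_g}\phi=-|\nabla_g\phi|_g\le-\delta<0$ since $\nu_g$ is antiparallel to $\nabla_g\phi$ there. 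This sign is precisely what makes the boundary integrand over $\Sigma$ arising from the cross term controllable: the dangerous $s^3|u|^2$ and $s|\partial_{\nu_g}u|^2$ pieces come with the good sign and can be moved to the left-hand side (producing the $+\int_\Sigma e^{2s\varphi}s^3|u|^2\,dS_gdt$ on the left), while the remaining tangential piece is bounded by $\int_\Sigma e^{2s\varphi}s|\nabla_{\tau_g}u|_g^2\,dS_gdt$, matching the statement. I would carry out this boundary computation by decomposing $\nabla_g u=\nabla_{\tau_g}u+(\partial_{\nu_g}u)\nu_g$ and using $|\nabla_g u|_g^2=|\nabla_{\tau_g}u|_g^2+|\partial_{\nu_g}u|^2$ as recorded in the excerpt.

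The main obstacle I anticipate is the bookkeeping on $\Sigma$: one must check that every term in the boundary quadratic form that is \emph{not} allowed on the right-hand side (anything of order $s^3|u|^2$ or $s|\partial_{\nu_g}u|^2$ without the tangential structure) genuinely carries the factor $\partial_{\nu_g}\phi$ with the correct sign, so that the strict negativity $\partial_{\nu_g}\phi\le-\delta$ lets it be absorbed on the left rather than estimated. A secondary technical point is the density/regularity step: the pointwise identity is derived for smooth $u$, so one first proves the estimate for $u\in C^\infty(\overline Q)$ with $u=\partial_t u=0$ on $D\times\{0\}$ and then extends to $u\in H^2(Q)$ with the same vanishing by approximation, noting that all boundary traces appearing are continuous on $H^2(Q)$. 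Tracking the dependence of $s_\ast$ and $C$ only on $(g,\phi,\gamma)$ is routine once the pseudoconvexity constants are identified, since $\gamma<\delta$ is a fixed separation.
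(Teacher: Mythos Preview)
Your proposal is correct and follows essentially the same route as the paper: conjugate by $e^{s\varphi}$, split $P_s=P_s^++P_s^-$, integrate the cross term by parts, use the strict positivity of $\nabla_g^2\phi$ for the volume lower bound, bound the $\Sigma_0$ and $D\times\{T\}$ boundary pieces crudely, and exploit $\nu_g=-\nabla_g\phi/|\nabla_g\phi|_g$ on $\Sigma$ together with $\gamma<\delta$ to give the $s^3|u|^2$ and $s|\partial_{\nu_g}u|^2$ contributions the good sign (the paper does this via an explicit completion of the square). The only nuance worth flagging is that in the paper the condition $\gamma<\delta$ enters solely in the $\Sigma$ boundary analysis, not in the interior ``pseudoconvexity'' step, since $\varphi''=0$ and the volume coercivity comes purely from $\nabla_g^2\phi$.
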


Note that only the integral term for the tangential derivative of $u$ on $\Sigma$ appears in the right-hand side. To our knowledge, this is the first Carleman estimate of this type for hyperbolic operators (see for instance \cite[Theorem 4.1]{Yamamoto2017}, \cite[Theorem 7.1]{Choulli}, \cite[Theorem 4.1]{Fu2019} and \cite[Proposition 3.1]{Takase2023}). Although the proof is quite technical, we give the full proof in Section \ref{proof_Carleman_estimate} for the sake of self-containedness.

We also need an energy inequality. To this end, set for any $u\in H^2(Q)$
\[
\mathcal{E}(u)(t):=\int_{D\times\{t\}}\left(|\partial_t u|^2+|\nabla_g u|_g^2+|u|^2\right)dV_g.
\]
We check that $\mathcal{E}(u)\in W^{1,1}((0,T);\mathbb{R})$.

\begin{prop}\label{energy_estimate}
For any $t\in[0,T]$ and $u\in H^2(Q)$ satisfying $(\partial_t^2-\Delta_g-p)u=0$ in $Q$ and $u=\partial_t u=0$ on $D\times\{0\}$ we have
\[
\mathcal{E}(u)(t)\le e^{(1+\|p\|_{L^\infty(D)})t} \left(\|u\|_{H^1(\Upsilon)}^2+\|\partial_{\nu_g} u\|_{L^2(\Upsilon)}^2\right).
\]
\end{prop}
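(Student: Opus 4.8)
The plan is to run the classical energy method for the wave equation, adapted to the Laplace--Beltrami operator. First I would multiply the equation $(\partial_t^2-\Delta_g)u=0$ by $\partial_t u$ and integrate over $D\times\{t\}$ against $dV_g$. The term $\int_D\partial_t^2u\,\partial_t u\,dV_g$ equals $\tfrac12\tfrac{d}{dt}\int_D|\partial_t u|^2dV_g$, and Green's formula for $\Delta_g$ gives
\[
\int_D\Delta_g u\,\partial_t u\,dV_g=-\int_D\langle\nabla_g u,\nabla_g\partial_t u\rangle\,dV_g+\int_\Lambda\partial_{\nu_g}u\,\partial_t u\,dS_g=-\tfrac12\tfrac{d}{dt}\int_D|\nabla_g u|_g^2dV_g+\int_\Lambda\partial_{\nu_g}u\,\partial_t u\,dS_g.
\]
Adding the elementary identity $\tfrac{d}{dt}\int_D|u|^2dV_g=2\int_D u\,\partial_t u\,dV_g$ and estimating $2\int_\Lambda\partial_{\nu_g}u\,\partial_t u\,dS_g\le\int_\Lambda(|\partial_{\nu_g}u|^2+|\partial_t u|^2)\,dS_g$ and $2\int_D u\,\partial_t u\,dV_g\le\int_D(|u|^2+|\partial_t u|^2)\,dV_g$, I obtain, writing $F(t):=\int_\Lambda(|\partial_{\nu_g}u|^2+|\partial_t u|^2)\,dS_g$,
\[
\tfrac{d}{dt}\mathcal{E}(u)(t)\le F(t)+\mathcal{E}(u)(t).
\]

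Next I would close the estimate by a Gr\"onwall argument. Since $u=\partial_t u=0$ on $D\times\{0\}$ (and hence $\nabla_g u=0$ there), we have $\mathcal{E}(u)(0)=0$; then $\bigl(e^{-t}\mathcal{E}(u)(t)\bigr)'\le e^{-t}F(t)$ yields $\mathcal{E}(u)(t)\le e^t\int_0^tF(s)\,ds\le e^t\int_0^TF(s)\,ds$. Finally $\int_0^TF(s)\,ds=\int_\Upsilon(|\partial_{\nu_g}u|^2+|\partial_t u|^2)\,dS_gdt\le\|\partial_{\nu_g}u\|_{L^2(\Upsilon)}^2+\|u\|_{H^1(\Upsilon)}^2$ by the definitions of the norms on $L^2(\Upsilon)$ and $H^1(\Upsilon)$, which gives exactly the asserted bound.

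The main obstacle is a regularity/justification issue rather than a computational one: at the stated regularity $u\in H^2(Q)$ one only has $\Delta_g u\in L^2(Q)$ and traces of $\partial_t u$ and of $\partial_{\nu_g}u$ on $\Upsilon$ understood in the $L^2$ sense, so the differentiations under the integral sign and the boundary integration by parts must be legitimized carefully. I would first establish the energy identity for $u$ in a dense subclass (e.g. $C^\infty(\overline{Q})$, or solutions with smooth data), for which $t\mapsto\int_D(|\partial_t u|^2+|\nabla_g u|_g^2+|u|^2)\,dV_g$ is genuinely $C^1$, and then pass to the limit using continuity of the trace maps $H^2(Q)\to H^1(\Upsilon)$ and $H^1(Q)\to L^2(\Upsilon)$ (applied to $u$, to $\partial_t u$, and to the components of $\nabla_g u$), together with the already noted fact that $\mathcal{E}(u)\in W^{1,1}((0,T))$. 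One should also observe that only integration in $t$ of the boundary term is needed, so no control of $\nabla_{\tau_g}u$ on $\Upsilon$ beyond what $\|u\|_{H^1(\Upsilon)}$ provides is required.
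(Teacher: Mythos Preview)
Your argument is correct and follows essentially the same route as the paper: multiply the equation by $\partial_t u$, integrate by parts over $D$ to obtain $\mathcal{E}'(t)\le\mathcal{E}(t)+F(t)$ with a boundary term $F$, and close with Gr\"onwall using $\mathcal{E}(0)=0$. The only cosmetic difference is that the paper applies the inequality $2ab\le a^2+b^2$ to $2\int_\Lambda|\partial_{\nu_g}u||\partial_t u|\,dS_g$ after Gr\"onwall rather than before, and the paper proceeds formally without the density/trace justification you outline.
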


The proof of this proposition will be given in Appendix \ref{appB}.

\subsection{Proof of Theorem \ref{exHolder}}

\begin{proof}
Let $\tau\in (0,T-T^\ast)$ and choose $\gamma\in(0,\delta)$ in such a way that $\mathfrak{m}-\gamma(T-\tau)<0$ and define
\[
\mu:=-\mathfrak{m}+\gamma(T-\tau)>0.
\]

Let $f=\mathfrak{a}\otimes \mathfrak{b}\in\mathscr{F}$ and $u=u(f,0,0)$. Then we have
\[
|\nabla_{\tau_g} u|_g=|\nabla_{\tau_g} f|_g=|\nabla_{\tau_g}\mathfrak{a}|_g|\mathfrak{b}|\quad \mbox{on}\; \Sigma
\]
and
\[
\|\nabla_{\tau_g}\mathfrak{a}\|_{L^2(\Gamma)}\le (\beta/\alpha)\|\mathfrak{a}\|_{L^2(\Gamma)}.
\]

Henceforth, $C=C(\zeta)>0$ and $c=c(\zeta)>0$ denote generic constants. For simplicity, we use in the sequel the following notations
\[\mathcal{D}:= \mathcal{D}(f)\; \left(=\|u(f)\|_{H^1(\Sigma)}+\|\partial_{\nu_g} u(f)\|_{L^2(\Sigma)}\right)\]
and
\[\mathcal{D}_0:=\mathcal{D}_0(f)\; \left(=\|u(f)\|_{H^1(\Sigma_0)}+\|\partial_{\nu_g} u(f)\|_{L^2(\Sigma_0)}\right).
\]

By $\varphi_{|\Sigma}=-\gamma t$ and applying Proposition \ref{global_Carleman_estimate} to $u$ yields
\begin{align*}
&C\int_\Sigma e^{2s\varphi}(s|\partial_t f|^2+s^3|f|^2)dS_gdt
\\
&\hskip .5cm \le \int_{\Sigma_0} e^{2s\varphi}(s|\partial_t u|^2+s|\nabla_g u|_g^2+s^3|u|^2)dS_gdt+\int_\Sigma e^{2s\varphi}s|\nabla_{\tau_g} f|_g^2dS_gdt
\\
&\hskip 2cm +\int_{D\times\{T\}}e^{2s\varphi}(s|\partial_t u|^2+s|\nabla_g u|_g^2+s^3|u|^2)dV_g
\\
&\hskip .5cm \le s^3e^{cs}\mathcal{D}_0^2+\int_0^T s e^{2s\varphi}|\mathfrak{b}|^2\int_\Gamma |\nabla_{\tau_g}\mathfrak{a}|_g^2dS_gdt+s^3e^{2(\mathfrak{m}-\gamma T)s}\mathcal{E}(u)(T)
\\
&\hskip .5cm \le s^3e^{cs}\mathcal{D}_0^2+(\beta/\alpha)^2\int_\Sigma e^{2s\varphi}s|f|^2dS_gdt+s^3e^{2(\mathfrak{m}-\gamma T)s}\mathcal{E}(u)(T),\quad s\ge s_\ast,
\end{align*}
where $s_\ast$ is as in Proposition \ref{global_Carleman_estimate}. Upon modifying $s_\ast$, we  may and do assume that $Cs^3-(\beta/\alpha)^2s>Cs^3/2$. In this case, we have
\[
C\int_\Sigma e^{2s\varphi}(|\partial_t f|^2+|f|^2)dS_gdt\le e^{cs}\mathcal{D}_0^2+s^{2}e^{2(\mathfrak{m}-\gamma T)s}\mathcal{E}(u)(T),\quad s\ge s_\ast.
\]
Since
\begin{align*}
\int_\Sigma e^{2s\varphi}(|\partial_t f|^2+|f|^2)dS_gdt&\ge\int_{\Gamma\times(0,\tau)}e^{2s\varphi}(|\partial_t f|^2+|f|^2)dS_gdt
\\
&\ge e^{-2\gamma\tau s}\|f\|_{H^1((0,\tau);L^2(\Gamma))}^2,
\end{align*}
we obtain by replacing the constant $c+2\gamma\tau$ with $c$
\[
C\|f\|_{H^1((0,\tau);L^2(\Gamma))}^2\le e^{cs}\mathcal{D}_0^2+s^2e^{-2\mu s}\mathcal{E}(u)(T),\quad s\ge s_\ast.
\]
By applying Proposition \ref{energy_estimate} to the second term of the right-hand side yields
\[
\mathcal{E}(u)(T)\le e^{(1+\|p\|_{L^\infty(D)})T}\left(\mathcal{D}_0^2+\mathcal{D}^2\right).
\]
Whence, by modifying the constant $c$, we obtain
\begin{align*}
C\|f\|_{H^1((0,\tau);L^2(\Gamma))}&\le e^{cs}\mathcal{D}_0+se^{-\mu s}\mathcal{D}
\\
&\le e^{cs}\mathcal{D}_0+e^{-(\mu/2) s}\mathcal{D},\quad s\ge s_\ast.
\end{align*}
By considering the new parameter $s':=s-s_\ast$ and modifying the constant $C$, we see that the above inequality holds for all $s\ge 0$. When $\mathcal{D}_0\ge\mathcal{D}$, $\|f\|_{H^1((0,\tau);L^2(\Gamma))}\le 2\mathcal{D}_0$ holds by choosing $s=0$. When $\mathcal{D}_0<\mathcal{D}$, we choose $s>0$ so that
\[
e^{cs}\mathcal{D}_0=e^{-(\mu/2) s}\mathcal{D},
\]
that is,
\[
s=\frac{1}{c+(\mu/2)}\log\frac{\mathcal{D}}{\mathcal{D}_0}.
\]
Thus, we get
\[
\|f\|_{H^1((0,\tau);L^2(\Gamma))}\le C\mathcal{D}^{1-\theta}\mathcal{D}_0^\theta
\]
for
\[\theta:=\frac{\mu}{2c+\mu}\in(0,1).\]
As
\begin{align*}
&\|f\|_{L^2(\Gamma\times(0,\tau))}=\|\mathfrak{a}\|_{L^2(\Gamma)}\|\mathfrak{b}\|_{L^2((0,\tau))},
\\
&\|\nabla_{\tau_g}f\|_{L^2(\Gamma\times(0,\tau))}\le (\beta/\alpha)\|\mathfrak{a}\|_{L^2(\Gamma)}\|\mathfrak{b}\|_{L^2((0,\tau))},
\end{align*}
the expected inequality follows.
\end{proof}

\section{Proof of Proposition \ref{global_Carleman_estimate}}\label{proof_Carleman_estimate}

\begin{proof}
It suffices to prove the inequality when $p=0$ due to the large parameter $s\ge s_\ast$.

Without loss of generality, we limit our proof to real-valued function. Also, for simplicity, we set $\square:=(\partial_t^2-\Delta_g)$ and $w':=\partial_t w$. Let $u\in H^2(Q)$, $z:=e^{s\varphi}u$, and 
\[
P_sz:=e^{s\varphi}\square(e^{-s\varphi}z).
\]
A direct calculation yields $P_sz=P_s^+z+P_s^- z$, where
\[
\begin{cases}
&P_s^+z:=\square z+s^2(|\varphi'|^2-|\nabla_g\varphi|_g^2)z,
\\
&P_s^-z:=-2s\varphi'z'+2s\langle\nabla_g\varphi,\nabla_g z\rangle-s\square\varphi z.
\end{cases}
\]

Hereinafter, the integrals on $Q=D\times(0,T)$ are with respect to the measure $dV_gdt$ and those on $\Upsilon=\partial D\times(0,T)$ are with respect to the surface measure $dS_gdt$. 

Set 
\[
(P_s^+z,P_s^-z)_g=:\sum_{k=1}^9 I_k,
\]
where
\begin{align*}
&I_1:=-2\int_{Q}s\varphi'z'z'',
\\
&I_2:=2\int_{Q} s\langle\nabla_g\varphi,\nabla_gz\rangle z'',
\\
&I_3:=-\int_{Q} s\square\varphi zz'',
\\
&I_4:=2\int_{Q}s\varphi' z'\Delta_g z,
\\
&I_5:=-2\int_{Q} s\Delta_gz\langle\nabla_g\varphi,\nabla_g z\rangle,
\\
&I_6:=\int_{Q} s\Delta_gz \square\varphi z,
\\
&I_7:=-2\int_{Q} s^3\varphi'(|\varphi'|^2-|\nabla_g\varphi|_g^2)zz',
\\
&I_8:=2\int_{Q} s^3(|\varphi'|^2-|\nabla_g\varphi|_g^2)\langle\nabla_g\varphi,\nabla_g z\rangle z,
\\
&I_9:=-\int_{Q} s^3\square\varphi(|\varphi'|^2-|\nabla_g\varphi|_g^2)|z|^2.
\end{align*}

Integrations by parts yield
\begin{align*}
&I_1=-\int_{Q}s\varphi'\partial_t(|z'|^2)=\int_{Q}s\varphi''|z'|^2-\int_D\left[s\varphi'|z'|^2\right]_0^T,
\\
&I_2=-\int_{Q}s\langle\nabla_g\varphi,\nabla_g|z'|^2\rangle+2\int_D\left[sz'\langle\nabla_g\varphi,\nabla_g z\rangle\right]_0^T
\\
&\hskip 2cm=\int_{Q}s\Delta_g\varphi|z'|^2-\int_\Upsilon s\partial_{\nu_g}\varphi |z'|^2+2\int_D\left[sz'\langle\nabla_g\varphi,\nabla_g z\rangle\right]_0^T,
\end{align*}

\begin{align*}
I_3&=\int_{Q}s\square\varphi |z'|^2+\int_{Q}s\partial_t\square\varphi z z'-\int_D\left[s\square\varphi zz'\right]_0^T
\\
&=\int_{Q}s\square\varphi|z'|^2-(1/2)\int_Qs\partial_t^2\square\varphi|z|^2-\int_D\left[s\square\varphi zz'\right]_0^T
\\
&\hskip 7cm+(1/2)\int_D\left[s\partial_t\square\varphi |z|^2\right]_0^T,
\end{align*}

\begin{align*}
I_4&=-\int_{Q}s\varphi'\partial_t(|\nabla_g z|_g^2)+2\int_\Upsilon s\varphi'\partial_{\nu_g} z z'
\\
&=\int_{Q}s\varphi''|\nabla_g z|_g^2+2\int_\Upsilon s\varphi'\partial_{\nu_g} z z'-\int_D\left[s\varphi'|\nabla_g z|_g^2\right]_0^T,
\end{align*}

\begin{align*}
I_5&=2\int_{Q}s\langle\nabla_g z,\nabla_g\langle\nabla_g\varphi,\nabla_g z\rangle\rangle-2\int_\Upsilon s\partial_{\nu_g} z\langle\nabla_g\varphi,\nabla_g z\rangle
\\
&=2\int_{Q}s\nabla_g^2\varphi(\nabla_g z,\nabla_g z)+\int_{Q}s\langle\nabla_g\varphi,\nabla_g(|\nabla_g z|_g^2)\rangle
\\
&\hskip 7cm-2\int_\Upsilon s\partial_{\nu_g} z\langle\nabla_g\varphi,\nabla_g z\rangle
\\
&=2\int_{Q}s\nabla_g^2\varphi(\nabla_g z,\nabla_g z)-\int_{Q}s\Delta_g\varphi|\nabla_g z|_g^2
\\
&\hskip 4cm-2\int_\Upsilon s\partial_{\nu_g} z\langle\nabla_g\varphi,\nabla_g z\rangle+\int_\Upsilon s\partial_{\nu_g}\varphi|\nabla_g z|_g^2,
\end{align*}
\[
I_6=-\int_{Q}s\square\varphi|\nabla_g z|_g^2-\int_{Q}s\langle\nabla_g\square\varphi,\nabla_g z\rangle z+\int_\Upsilon s\square\varphi\partial_{\nu_g} z z,
\]
\begin{align*}
I_7&=-\int_{Q}s^3\varphi'(|\varphi'|^2-|\nabla_g\varphi|_g^2)\partial_t(|z|^2)
\\
&=\int_{Q}s^3\partial_t(\varphi'(|\varphi'|^2-|\nabla_g\varphi|_g^2))|z|^2-\int_D\left[s^3\varphi'(|\varphi'|^2-|\nabla_g\varphi|_g^2)|z|^2\right]_0^T,\end{align*}

\begin{align*}
I_8&=\int_{Q}s^3(|\varphi'|^2-|\nabla_g\varphi|_g^2)\langle\nabla_g\varphi,\nabla_g(|z|^2)\rangle
\\
&=-\int_{Q}s^3\divergence((|\varphi'|^2-|\nabla_g\varphi|_g^2)\nabla_g\varphi)|z|^2+\int_\Upsilon s^3(|\varphi'|^2-|\nabla_g\varphi|_g^2)\partial_{\nu_g}\varphi |z|^2.
\end{align*}

Adding all of these together yields
\begin{align*}
&(P_s^+ z,P_s^- z)_g+\mathcal{B}=
\\
&\hskip .2cm \int_{Q}2s\left[\varphi''|z'|^2+\nabla_g^2\varphi(\nabla_g z,\nabla_g z)\right]-\int_{Q}s\left[\langle\nabla_g\square\varphi,\nabla_g z\rangle +(1/2)\partial_t^2\square\varphi z\right]z
\\
&\hskip .5cm+\int_{Q}\left[s^3\varphi'\partial_t(|\varphi'|^2-|\nabla_g\varphi|_g^2)-s^3\langle\nabla_g(|\varphi'|^2-|\nabla_g\varphi|_g^2),\nabla_g\varphi\rangle\right]|z|^2
\end{align*}
where

\begin{align*}
&\mathcal{B}:=\int_\Upsilon \left[s^3(|\nabla_g\varphi|_g^2-|\varphi'|^2)\partial_{\nu_g}\varphi\right] |z|^2
\\
&\hskip 1cm+\int_\Upsilon \left[-2s\varphi'z'+2s\langle\nabla_g\varphi,\nabla_g z\rangle-s\square\varphi z\right]\partial_{\nu_g}z
\\
&\hskip 1.5cm
+\int_\Upsilon \left[s\partial_{\nu_g}\varphi |z'|^2-s\partial_{\nu_g}\varphi|\nabla_g z|_g^2\right]
\\
&\hskip 2cm +\int_{D\times\{T\}}\left[(-s/2)\partial_t\square\varphi-s^3\varphi'(|\nabla_g\varphi|_g^2-|\varphi'|^2)\right]|z|^2
\\
&\hskip 2.5cm+\int_{D\times\{T\}}\left[-2s\langle\nabla_g\varphi,\nabla_g z\rangle+s\square\varphi z\right]z'
\\
&\hskip 5cm
+\int_{D\times\{T\}}s\varphi'\left[|z'|^2+|\nabla_g z|_g^2\right].
\end{align*}

Henceforth, $C=C(g,\phi,\gamma)>0$ and $s_\ast=s_\ast(g,\phi,\gamma)>0$ denote generic constants. Then we have
\[
\varphi''|z'|^2+\nabla_g^2\varphi(\nabla_g z,\nabla_g z)
=\nabla_g^2\phi(\nabla_g z,\nabla_g z)
\\
\ge C|\nabla_g z|_g^2,
\]
and
\begin{align*}
&\varphi'\partial_t(|\varphi'|^2-|\nabla_g\varphi|_g^2)-\langle\nabla_g(|\varphi'|^2-|\nabla_g\varphi|_g^2),\nabla_g\varphi\rangle
\\
&\hskip 2cm=2\nabla_g^2\phi(\nabla_g\phi,\nabla_g\phi) \ge C|\nabla_g\phi|_g^2.
\end{align*}
In consequence, we get
\[
C\int_{Q} (s|\nabla_g z|_g^2+s^3|z|^2)\le (P_s^+z,P_s^-z)_g+\int_{Q}s\left[\langle\nabla_g\square\varphi,\nabla_g z\rangle+(1/2)\partial_t^2\square\varphi z\right]z+\mathcal{B}.
\]

Set
\begin{align*}
&\mathcal{B}_{\Sigma_0}:=\int_{\Sigma_0} \left[s^3(|\nabla_g\varphi|_g^2-|\varphi'|^2)\partial_{\nu_g}\varphi\right] |z|^2
\\
&\hskip 1cm+\int_{\Sigma_0} \left[-2s\varphi'z'+2s\langle\nabla_g\varphi,\nabla_g z\rangle-s\square\varphi z\right]\partial_{\nu_g}z
\\
&\hskip 4cm +\int_{\Sigma_0} \left[s\partial_{\nu_g}\varphi |z'|^2-s\partial_{\nu_g}\varphi|\nabla_g z|_g^2\right],
\\
&\mathcal{B}_\Sigma:=\int_\Sigma \left[s^3(|\nabla_g\varphi|_g^2-|\varphi'|^2)\partial_{\nu_g}\varphi\right] |z|^2
\\
&\hskip 1cm+\int_\Sigma \left[-2s\varphi'z'+2s\langle\nabla_g\varphi,\nabla_g z\rangle-s\square\varphi z\right]\partial_{\nu_g}z
\\
&\hskip 4cm+\int_\Sigma \left[s\partial_{\nu_g}\varphi |z'|^2-s\partial_{\nu_g}\varphi|\nabla_g z|_g^2\right],\end{align*}
and
\begin{align*}&\mathcal{B}_T:=\int_{D\times\{T\}}\left[(-s/2)\partial_t\square\varphi-s^3\varphi'(|\nabla_g\varphi|_g^2-|\varphi'|^2)\right]|z|^2
\\
&\hskip 1cm+\int_{D\times\{T\}}\left[-2s\langle\nabla_g\varphi,\nabla_g z\rangle+s\square\varphi z\right]z'
\\
&\hskip 1.5cm+\int_{D\times\{T\}}s\varphi'\left[|z'|^2+|\nabla_g z|_g^2\right]\end{align*}
so that
\[
\mathcal{B}=\mathcal{B}_{\Sigma_0}+\mathcal{B}_\Sigma+\mathcal{B}_T.
\]
We check that
\[
\mathcal{B}_{\Sigma_0}\le C\int_{\Sigma_0}\left[s|z'|^2+s|\nabla_g z|_g^2+s^3|z|^2\right],
\]
and
\[\mathcal{B}_T\le C\int_{D\times\{T\}} \left[s|z'|^2+s|\nabla_g z|_g^2+s^3|z|^2\right].\]

On the other hand,  we have
\[
|\nabla_g z|_g^2=|\nabla_{\tau_g}z|_g^2+|\partial_{\nu_g} z|^2,\quad \langle\nabla_g\varphi,\nabla_g z\rangle=-|\nabla_g\phi|_g\partial_{\nu_g} z\quad \mbox{on}\; \Sigma
\]
and $\nu_g=-\frac{\nabla_g\phi}{|\nabla_g\phi|_g}$ on $\Sigma$. Whence
\begin{align*}
\mathcal{B}_\Sigma&=\int_\Sigma \left[-s^3(|\nabla_g\phi|_g^2-\gamma^2)|\nabla_g\phi|_g\right] |z|^2
\\
&\hskip 1cm+\int_\Sigma \left[2\gamma sz'-2s|\nabla_g\phi|_g\partial_{\nu_g} z-s\square\varphi z\right]\partial_{\nu_g}z
\\
&\hskip 1.5cm+\int_\Sigma \left[-s|\nabla_g\phi|_g|z'|^2+s|\nabla_g\phi|_g(|\nabla_{\tau_g}z|_g^2+|\partial_{\nu_g} z|^2)\right]
\\
&=\int_\Sigma \left[-s|\nabla_g\phi|_g|z'|^2+2\gamma s\partial_{\nu_g} z z'-s|\nabla_g\phi|_g|\partial_{\nu_g} z|^2+s|\nabla_g\phi|_g|\nabla_{\tau_g} z|_g^2\right.
\\
&\hskip 2cm\left.-s\square\varphi \partial_{\nu_g} z z-s^3(|\nabla_g\phi|_g^2-\gamma^2)|\nabla_g\phi|_g|z|^2\right]
\\
&\le\int_\Sigma s\left[-\delta|z'|^2+(2\gamma z'-\square\varphi z)\partial_{\nu_g} z-\delta|\partial_{\nu_g} z|^2+|\nabla_g\phi|_g|\nabla_{\tau_g}z|_g^2\right.
\\
&\hskip 6cm \left.-s^2(\delta^2-\gamma^2)\delta|z|^2\right],
\end{align*}
which means
\begin{align*}
&\mathcal{B}_\Sigma+\frac{\delta-\gamma}{2}\int_\Sigma s\left[|z'|^2+|\partial_{\nu_g}z|^2\right]
\\
&\hskip 1cm \le\int_\Sigma s\left[-\frac{\delta+\gamma}{2}|z'|^2+(2\gamma z'-\square\varphi z)\partial_{\nu_g} z-\frac{\delta+\gamma}{2}|\partial_{\nu_g} z|^2\right.
\\
&\hskip 5cm \left.+|\nabla_g\phi|_g|\nabla_{\tau_g}z|_g^2-s^2(\delta^2-\gamma^2)\delta|z|^2\right].
\end{align*}
Set $\eta:=\frac{\delta+\gamma}{2}$. By $0<\gamma<\delta$, we note that $\gamma<\eta<\delta$ and
\begin{align*}
&-\eta|z'|^2+(2\gamma z'-\square\varphi z)\partial_{\nu_g}z-\eta|\partial_{\nu_g} z|^2
\\
&\hskip 3cm =-\eta\left|z'-\frac{\gamma}{\eta}\partial_{\nu_g} z\right|^2-\frac{\eta^2-\gamma^2}{\eta}|\partial_{\nu_g} z|^2-\square\varphi z\partial_{\nu_g} z\\
&\hskip 3cm \le-\frac{\eta^2-\gamma^2}{\eta}\left|\partial_{\nu_g} z+\frac{\eta\square\varphi z}{2(\eta^2-\gamma^2)}\right|^2+C|z|^2
\\
&\hskip 3cm \le C|z|^2.\end{align*}
Therefore, we obtain
\[
\mathcal{B}_\Sigma+\frac{\delta-\gamma}{2}\int_\Sigma s\left[|z'|^2+|\partial_{\nu_g}z|^2\right]\le\int_\Sigma s\left[C|\nabla_{\tau_g}z|_g^2+C|z|^2-s^2(\delta^2-\gamma^2)\delta|z|^2\right]
\]
and then
\begin{align*}
&\mathcal{B}_\Sigma+\frac{\delta-\gamma}{2}\int_\Sigma s\left[|z'|^2+|\partial_{\nu_g}z|^2\right]+\frac{(\delta^2-\gamma^2)\delta}{2}\int_\Sigma s^3|z|^2
\\
&\hskip 7cm \le C\int_\Sigma s|\nabla_{\tau_g}z|_g^2,\quad s\ge s_\ast.
\end{align*}

Combining the estimates above, we get 
\begin{align*}
&C\left(\int_{Q}(s|\nabla_g z|_g^2+s^3|z|^2)+\int_{\Sigma}(s|z'|^2+s|\partial_{\nu_g}z|^2+s^3|z|^2)\right)
\\
&\hskip 1cm \le  (P_s^+z,P_s^-z)_g+\int_{Q}s\left[\langle\nabla_g\square\varphi,\nabla_g z\rangle+(1/2)\partial_t^2\square\varphi z\right]z
\\
&\hskip 6cm+\mathcal{B}_{\Sigma_0}+\mathcal{B}_T+\int_\Sigma s|\nabla_{\tau_g}z|_g^2
\\
&\hskip 1cm \le \|P_s z\|_g^2+\int_{Q} \left(|\nabla_g z|_g^2+s^2|z|^2\right)+\int_{\Sigma_0}\left(s|z'|^2+s|\nabla_g z|_g^2+s^3|z|^2\right)
\\
&\hskip 2cm+\int_\Sigma s|\nabla_{\tau_g}z|_g^2+\int_{D\times\{T\}}\left(s|z'|^2+s|\nabla_g z|_g^2+s^3|z|^2\right),\quad s\ge s_\ast.
\end{align*}
As the second term in the right-hand side  can absorbed by the left-hand side, we find
\begin{align*}
&C\left(\int_{Q}(s|\nabla_g z|_g^2+s^3|z|^2)+\int_\Sigma (s|z'|^2+s|\partial_{\nu_g}z|^2+s^3|z|^2)\right)
\\
&\hskip .5cm \le \|P_s z\|_g^2+\int_{\Sigma_0}(s|z'|^2+s|\nabla_g z|_g^2+s^3|z|^2)
\\
&\hskip 1cm+\int_\Sigma s|\nabla_{\tau_g}z|_g^2+\int_{D\times\{T\}}(s|z'|^2+s|\nabla_g z|_g^2+s^3|z|^2),\quad s\ge s_\ast.
\end{align*}
Since $u=e^{-s\varphi}z$ and $\nabla_{\tau_g}u=\nabla_{\tau_g}z$ holds by $\nabla_{\tau_g}\phi=0$, we end up getting
\begin{align*}
&C\left(\int_{Q} e^{2s\varphi}(s|\nabla_g u|_g^2+s^3|u|^2)+\int_\Sigma  e^{2s\varphi}(s|u'|^2+s|\partial_{\nu_g}u|^2+s^3|u|^2)\right)
\\
&\hskip .5cm\le \int_{Q} e^{2s\varphi}|\square u|^2+\int_{\Sigma_0}e^{2s\varphi}(s|u'|^2+s|\nabla_g u|_g^2+s^3|u|^2)
\\
&\hskip 1cm+\int_\Sigma  e^{2s\varphi}s|\nabla_{\tau_g}u|_g^2+\int_{D\times\{T\}} e^{2s\varphi}(s|u'|^2+s|\nabla_g u|_g^2+s^3|u|^2), \quad s\ge s_\ast.
\end{align*}
The proof is then complete.
\end{proof}

\appendix

\section{Analysis of the IBVP}\label{appA}

Let $A\colon D(A)\subset L^2(U)\rightarrow L^2(U)$ be the unbounded operator given by
\[
Au=\Delta_g u\quad \mbox{and}\quad D(A):=H_0^1(U)\cap H^2(U).
\]
Let $u\in D(A)$. By Green's formula, we have
\[
\int_U\Delta_gu\overline{u}dV_g=-\int_U|\nabla_g u|_g^2dV_g\le 0,
\]
that is, $A$ is dissipative. Let $\lambda >0$ and $u_0\in L^2(U)$. Since 
\[
B(u,v)=\lambda  \int_Uu\overline{v}dV_g+\int_U\langle \nabla_gu,\nabla_g\overline{v}\rangle dV_g,\quad u,v\in H_0^1(U)
\]
is an equivalent scalar product on $H_0^1(U)$, Riesz representation theorem shows that there exists a unique $u\in H_0^1(U)$ such that
\[
\lambda  \int_Uu\overline{v}dV_g+\int_U\langle \nabla_gu,\nabla_g\overline{v}\rangle dV_g=\int_Uu_0\overline{v}dV_g,\quad v\in H_0^1(U).
\]
Hence, $(\lambda -\Delta_g)u=u_0$ in the distributional sense and then $u\in D(A)$ according to the elliptic regularity (e.g., \cite[Theorem 9.25]{Brezis2011} and Remark 24 just after \cite[Theorem 9.26]{Brezis2011}).

We endow the Hilbert space $\mathcal{H}:=H_0^1(U)\times L^2(U)$ with the scalar product
\[
\langle (v_1,w_1)|(v_2,w_2)\rangle =\int_U [\langle \nabla_g v_1,\nabla_g \overline{v_2}\rangle+w_1\overline{w_2}]dV_g,\quad (v_j,w_j)\in \mathcal{H},\; j=1,2.
\] 
Define the unbounded operator $\mathcal{A}\colon D(\mathcal{A})\subset\mathcal{H}\rightarrow \mathcal{H}$ as follows
\[
\mathcal{A}(u,v)=(v,Au)\quad \mbox{and}\; D(\mathcal{A}):=\mathcal{H}_0:=(H_0^1(U)\cap H^2(U))\times H_0^1(U).
\]
%Since $-A$ is self-adjoint positive operator, 
Let $(u,v)\in D(\mathcal{A})$. As
\begin{align*}
\langle \mathcal{A}(u,v)|(u,v)\rangle &=\int_U \langle \nabla_g\overline{u},\nabla_gv\rangle dV_g+\int_U \Delta_gu\overline{v}dV_g
\\
&=\int_U \langle \nabla_g\overline{u},\nabla_gv\rangle dV_g-\int_U \langle \nabla_gu,\nabla_g\overline{v}\rangle dV_g,
\end{align*}
we obtain
\[
\Re \langle \mathcal{A}(u,v)|(u,v)\rangle =0,
\]
that is, $\mathcal{A}$ is dissipative.

Let $\lambda>0$ and $(u_0,u_1)\in \mathcal{H}$ and consider the equation
\[
\lambda (u,v)- \mathcal{A}(u,v)=(u_0,u_1).
\]
This equation is equivalent to the following system
\begin{equation}\label{we3}
(\lambda^2 -A)u=u_1+\lambda u_0,\quad v=\lambda u-u_0.
\end{equation}
We have seen above that $A$ is $m$-dissipative. Hence, \eqref{we3} admits unique solution $(u,v)\in D(\mathcal{A})$. In other words, we proved that $\mathcal{A}$ is $m$-dissipative and therefore, according to Lumer-Phillips theorem, $\mathcal{A}$ generates a contraction semigroup (e.g., \cite[Theorem 3.8.4]{Tucsnak2009}). 

Let $\mathcal{X}_0$ be the subspace of $\mathcal{X}$ given by
\[
\mathcal{X}_0=C^2([0,T];L^2(U))\cap C^1([0,T];H_0^1(U))\cap C([0,T];H_0^1(U)\cap H^2(U)).
\]

As $\mathcal{A}$ is the generator of a contraction semigroup, for each $(u_0,u_1)\in \mathcal{H}_0$ and $F\in W^{1,1}((0,T);L^2(U))$ the IBVP 
\[
\begin{cases}
(\partial_t^2-\Delta_g )v=F\quad \mbox{in}\; U\times (0,T),
\\
v_{|\Sigma}=0,
 \\
 (v(\cdot,0),\partial_tv(\cdot,0))=(u_0,u_1)
\end{cases}
\]
has a unique solution $v=v(u_0,u_1,F)\in \mathcal{X}_0$. Furthermore, we have
\begin{equation}\label{we2}
\|v\|_{\mathcal{X}}\le \mathbf{c}_0 \left(\|(u_0,u_1)\|_{\mathcal{H}_0}+\|F\|_{W^{1,1}((0,T);L^2(U))}\right),
\end{equation}
where $\mathbf{c}_0=\mathbf{c}_0(g,\phi,T)>0$ is a constant.

Next, consider the IBVP
\begin{equation}\label{exIBVP_p=0}
\begin{cases}
(\partial_t^2 -\Delta_g)u=0\quad \text{in}\; U\times(0,T),
\\
u_{|\Sigma}=f,
\\
(u(\cdot,0),\partial_tu(\cdot,0))=(u_0,u_1).
\end{cases}
\end{equation}
Let $(f,u_0,u_1)\in \mathcal{Z}$ and $H\in \dot{\overbrace{(f,u_0,u_1)}}$. If $F:=-(\partial_t^2 -\Delta_g)H$ ($\in W^{1,1}((0,T);L^2(U))$)  then we check that $u=u(f,u_0,u_1):=H+v(0,0,F)\in \mathcal{X}$ is the unique solution of 
\eqref{exIBVP_p=0}. Furthermore, as  $H\in \dot{\overbrace{(f,u_0,u_1)}}$ was taken arbitrary, \eqref{we2} implies
\[
\|u\|_{\mathcal{X}}\le \mathbf{c} \|(f,u_0,u_1)\|_{\mathcal{Y}},
\]
where $\mathbf{c}=\mathbf{c}(g,\phi,T)>0$ is a constant.

Let us remark that, according to perturbation theory of strongly continuous semigroups (e.g., \cite[Chapter 3.1]{Pazy1983}), the result for the IBVP \eqref{exIBVP_p=0} still holds when $\Delta_g u$ is replaced by $\Delta_g u+pu+q\partial_t u$, where $p,q\in L^\infty (U)$.

\section{Proof of Proposition \ref{energy_estimate}}\label{appB}

This section is devoted to the proof of Proposition \ref{energy_estimate}.

\begin{proof}
Throughout the proof, we fix $t\in(0,T)$ almost everywhere.

Multiplying $2\partial_t \overline{u}$ to the equation $(\partial_t^2-\Delta_g-p)u=0$, integrating it over $D$ and taking the real part of the equality yield
\begin{align*}
0&=\frac{d}{dt}\int_D|\partial_t u|^2dV_g-\Re\int_D2\Delta_g u\partial_t \overline{u}dV_g-\Re\int_D2pu\partial_t \overline{u}dV_g
\\
&=\frac{d}{dt}\int_D(|\partial_t u|^2+|\nabla_g u|_g^2)dV_g-\Re\int_{\partial D} 2\partial_{\nu_g} u\partial_t \overline{u}dS_g-\Re\int_D2pu\partial_t \overline{u}dV_g.
\end{align*}
Recall that we set
\[
\mathcal{E}(u)(t)=\int_{D\times\{t\}}\left(|\partial_t u|^2+|\nabla_g u|_g^2+|u|^2\right)dV_g.
\]
Then, we have trivially
\[
\left|\Re\int_D2pu\partial_t \overline{u}dV_g\right|\le \|p\|_{L^\infty(D)}\mathcal{E}(u)(t)
\]
and
\[
\frac{d}{dt}\int_D|u|^2dV_g\le \int_D (|\partial_t u|^2+|u|^2)dV_g\le \mathcal{E}(u)(t).
\]
In consequence, we have
\[
\mathcal{E}(u)'(t)\le \left(1+\|p\|_{L^\infty(D)}\right)\mathcal{E}(u)(t)+2\int_{\partial D} |\partial_{\nu_g}u||\partial_t u|dV_g.
\]
Gr\"{o}nwall's inequality gives
\begin{align*}
&\mathcal{E}(u)(t)\le 2e^{(1+\|p\|_{L^\infty(D)})t}\int_{\partial D\times (0,t)}|\partial_{\nu_g} u||\partial_t u|dS_gdt
\\
&\hskip 4cm\le e^{(1+\|p\|_{L^\infty(D)})t}\left(\|u\|_{H^1(\Upsilon)}^2+\|\partial_{\nu_g} u\|_{L^2(\Upsilon)}^2\right).
\end{align*}
By the density argument, this inequality holds for all $t\in[0,T]$.
\end{proof}

\section*{Acknowledgement}

This work was supported by JSPS KAKENHI Grant Number JP23KK0049.

\section*{Declarations}
\subsection*{Conflict of interest}
The authors declare that they have no conflict of interest.

\subsection*{Data availability}
Data sharing not applicable to this article as no datasets were generated or analyzed during the current study.

\bibliographystyle{plain}
\bibliography{hyperbolic_continuation_8}

\begin{thebibliography}{10}

\bibitem{Yamamoto2017}
M.~Bellassoued and M.~Yamamoto.
\newblock {\em Carleman Estimates and Applications to Inverse Problems for
  Hyperbolic Systems}.
\newblock Springer Japan, 2017.

\bibitem{Bourgeois2019}
L.~Bourgeois, D.~Ponomarev, and J.~Dardé.
\newblock An inverse obstacle problem for the wave equation in a finite time
  domain.
\newblock {\em Inverse Problems and Imaging}, 19:377--400, 2019.

\bibitem{Brezis2011}
H.~Brezis.
\newblock {\em Functional Analysis, Sobolev Spaces and Partial Differential
  Equations}.
\newblock Springer, 2011.

\bibitem{Choulli}
M.~Choulli.
\newblock {\em An Introduction to the Uniqueness of Continuation of Second
  Order Partial Differential Equations}.
\newblock to appear.

\bibitem{Choulli2024}
M.~Choulli.
\newblock Stable determination of the initial data in an ibvp for the wave
  equation outside a non-trapping obstacle.
\newblock {\em Communications on Applied Mathematics and Computation}, 2024.

\bibitem{CT2024}
M.~Choulli and H.~Takase.
\newblock Lipschitz stability for an elliptic inverse problem with two
  measurements.
\newblock {\em arXiv:2404.13901}, 2024.

\bibitem{Fu2019}
X.~Fu, Q.~Lü, and X.~Zhang.
\newblock {\em Carleman Estimates for Second Order Hyperbolic Operators and
  Applications}.
\newblock Springer, 2019.

\bibitem{Isakov2009}
V.~Isakov.
\newblock Inverse obstacle problems.
\newblock {\em Inverse Problems}, 25:18pp, 2009.

\bibitem{Pazy1983}
A.~Pazy.
\newblock {\em Semigroups of Linear Operators and Applications to Partial
  Differential Equations}.
\newblock Springer-Verlag, New York, 1983.

\bibitem{Takase2023}
H.~Takase.
\newblock Global lipschitz stability for inverse problems of wave equations on
  lorentzian manifolds.
\newblock {\em Journal of Differential Equations}, 372:564--590, 2023.

\bibitem{Tucsnak2009}
M.~Tucsnak and G.~Weiss.
\newblock {\em Observation and Control for Operator Semigroups}.
\newblock Birkhäuser Verlag, 2009.

\end{thebibliography}

\end{document}